\newtheorem{theorem}{Theorem}[section]
\newtheorem{lemma}{Lemma}[section]
\newtheorem{corollary}{Corollary}[section]
\newtheorem{remark}{Remark}[section]
\def\elsartstyle{%
    \def\normalsize{\@setfontsize\normalsize\@xiipt{14.5}}
    \def\small{\@setfontsize\small\@xipt{13.6}}
    \let\footnotesize=\small
    \def\large{\@setfontsize\large\@xivpt{18}}
    \def\Large{\@setfontsize\Large\@xviipt{22}}
    \skip\@mpfootins = 18\p@ \@plus 2\p@
    \normalsize
}
\begin{document}

\begin{frontmatter}
\title{On Hermitian Adjacency Matrices for Mixed Graphs }

\journal{~~}

\author[CW]{Tao She}\ead{shetao@mails.ccnu.edu.cn},
\author[CW]{Chunxiang Wang*}\ead{wcxiang@mail.ccnu.edu.cn}

\address[CW]{ School of Mathematics and Statistics, Central China Normal University, Wuhan,  P.R. China}

\corauth[cor]{Corresponding author:  Chunxiang Wang}

\begin{abstract}
We study the spectra of mixed graphs about its Hermitian
adjacency matrix of the second kind (i.e. N-matrix) introduced by Mohar \cite{2019A}. We extend some results and  define one new Hermitian
adjacency matrix, and the entry corresponding to an arc
from u to v is equal to the k-th( or the third) root of unity, $i.e.$ $\omega=cos(\frac{2\pi}{k})+ \textbf{\emph{i}} \;sin(\frac{ 2\pi}{k})$, $k\geq 3$; the entry corresponding to an undirected edge is equal to 1, and 0 otherwise. In this paper,  we characterize  the cospectrality conditions for mixed graphs with the same underlying graph. In section 4, we determine a sharp upper bound on the spectral radius of mixed graphs,  and  provide  the corresponding extremal graphs.

\vskip 2mm \noindent {\bf Keywords:}  Mixed graph,   Eigenvalue,  Adjacency matrix, Cospectrality

\vskip 2mm \noindent{\bf AMS subject classification:} 05C50, 15A18
\end{abstract}

\end{frontmatter}

\section{Introduction and Preliminaries}

In 2011, Reff \cite{2011Spectral} proposed the $\mathbb{T}$-gain graph (or complex unit gain graph) and its adjacency matrix.
The circle group $\mathbb{T}$ is the multiplicative group of all complex numbers with norm 1, i.e. $\mathbb{T} = \{ z \in \mathbb{C} : |z| = 1 \}$. Let $\mathbb{T}_n$ denotes the $n$-th roots of the unity, i.e. $\mathbb{T}_n = \{ z \in \mathbb{C} : z^n = 1 \}$, is a subgroup of $\mathbb{T}$. The adjacency matrix $A(M_G) = (m_{ij}) \in \mathbb{C}^{n \times n}$ of a $\mathbb{T}$-gain graph $M_G$ is defined by
\begin{eqnarray}  \label{def:1}
 m_{ij} =
\left\{
\begin{aligned}
&\varphi(e_{ij}) \quad if \; v_i \; is \; adjacent \; to \; v_j \\
&0 \quad \quad \quad otherwise
\end{aligned},
\right.
\end{eqnarray}
where $e_{ij}$ denotes the edge between $v_i$ and $v_j$, $\varphi(e_{ij})$ = $\varphi(e_{ji})^{-1}$ = $\overline{\varphi(e_{ji})} \in \mathbb{T}$.

A mixed graph is a graph with some undirected edges and some directed edges. Let $M_G$ denote a mixed graph whose underlying graph is a simple graph $G$.
$M_G$ has vertex set $V(M_G)$ and edge set $E(M_G)$. The vertices adjacent to $v \in V(M_G)$ are partitioned into three sets, that is, 
\\ $N_G(v) =N^0_{M_G}(v) \bigcup N^+_{M_G}(v) \bigcup N^-_{M_G}(v)$, where
\\$N^0_{M_G}(v) =\{u \in V(M_G): \{u, v\} \in E(M_G)\}$,
\\$N^+_{M_G}(v) =\{u \in V(M_G): \overrightarrow{vu} \in E(M_G)\}$,
\\$N^-_{M_G}(v) =\{u \in V(M_G): \overrightarrow{uv} \in E(M_G)\}$.
\\ We say that two mixed graphs are cospectral if they have the same spectrum. In this paper, we mainly consider the cospectrality of mixed graphs with the same underlying graph.

 The degree of a vertex in a mixed graph $M_G$ is defined to be the degree of this vertex in the underlying graph $G$. We say that $M_G$ is a connected mixed graph if $G$ is connected. The spectral radius of $M_G$ is defined as $\rho(M_G) = max \{ |\lambda| \}$, where $\lambda$ iterates through the eigenvalue set of $M_G$.

Mohar \cite{2019A} introduced a new Hermitian matrix ($N$-matrix for short) with $\varphi(e_{ij}) \in \mathbb{T}_6$ in (\ref{def:1}) and studied the eigenvalues of this new matrix.
Shuchao Li et al. \cite{2021Hermitian} investigate some basic properties of the $N$-matrix, which may be viewed as a continuance of Mohar's work \cite{2019A}.
They give some equivalent switchings,  which are called two-way and three-way switchings, to get cospectral graphs for a mixed graph.
Some equivalent conditions are deduced, and a sharp upper bound on the spectral radius of mixed graphs is established.

Inspired by the conclusions introduced by Mohar\cite{2019A} and Shuchao Li et al. \cite{2021Hermitian} above, we find that $\varphi(e_{ij}) \in \mathbb{T}_3$ in (\ref{def:1}) is also feasible. Furthermore, we can study all the cases of $\varphi(e_{ij}) \in \mathbb{T}_k$ ($k \geq 3$). Thus, we extend their results.
In this paper,  we characterize  the cospectrality conditions for mixed graphs with the same underlying graph. In addition,   we determine a sharp
upper bound on the spectral radius of mixed graphs,  and  provide  the corresponding extremal graphs.

\par We define a new Hermitian adjacency matrix $H(M_G) = (h_{ij})_{n \times n}$ for a mixed graph $M_G$, where
\begin{eqnarray} \label{def:2}
 h_{ij} =
\left\{
\begin{aligned}
&\omega, \quad if \; \overrightarrow{u_i u_j} \; is \; an \; arc \; from \; u_i \; to \; u_j; \\
&\overline{\omega}, \quad if \; \overrightarrow{u_j u_i} \; is \; an \; arc \; from \; u_j \; to \; u_i; \\
&1, \quad if \; u_i \; u_j \; is \; an \; undirected \; edge; \\
&0, \quad otherwise,
\end{aligned}
\right.
\end{eqnarray}
and $\omega=cos(\frac{2\pi}{k})+ \textbf{\emph{i}} \;sin(\frac{ 2\pi}{k}), k\geq 3$ is the $k$-th root of unity, $\overline{\omega}$ is the complex conjugate of $\omega$.

Let $\overline{\alpha}$ denote the the complex conjugate of $\alpha \in \mathbb{C}$. Let $\overline{H^T(M_G)}$ denote the conjugate transpose of $H(M_G)$, then $\overline{H^T(M_G)} = H(M_G)$ and all eigenvalues of $H(M_G)$ are real.

A mixed graph is called a mixed path (a mixed cycle, a mixed tree) if its underlying graph is a path (a cycle, a tree) respectively. Let $M_C = v_1 v_2 v_3 \cdots v_{t-1} v_t v_1$ be a mixed cycle. We define the weight of $M_C$ in a direction as:

$W(M_C) = h_{12} h_{23} \cdots h_{(t-1)t} h_{t1}$, \; where $h_{ij}$ is the $(v_i, v_j)$-entry of $H(M_G)$.

It is obvious that $W(M_C) \in \mathbb{T}_k$. If the weight of a mixed cycle is $\alpha$ in one direction, then the weight in the reversed direction is $\overline{\alpha}$. For a complex number $c$, let $\mathfrak{R}(c)$ denote the real part of $c$.

\section{Switching equivalence and cospectrality for $\mathbb{T}_k$ }

In this section, we focus on the sufficient conditions of cospectrality.

A corollary of Reff(\cite{Reff2016Oriented}, Lemma 2.2) is shown as below.

\begin{corollary} \label{coro:1}
Let $M_G$ and $M'_G$ be be mixed graphs with the same underlying graph $G$. If for every cycle $C$ in $G$, $\mathfrak{R}(W(M_C)) = \mathfrak{R}(W(M'_C))$, then $M_G$ and $M'_G$ are cospectral, where $M_C$(resp. $M'_C$) is a mixed cycle in $M_G$(resp. $M'_G$) whose underlying graph is $C\frac{}{}$.
\end{corollary}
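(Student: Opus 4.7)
The plan is to appeal to the Sachs-type (coefficient) formula for the characteristic polynomial of a Hermitian adjacency matrix that is the essential content of Reff's Lemma 2.2. For a $\mathbb{T}$-gain graph with Hermitian matrix $H$, the coefficient of $x^{n-k}$ in $\det(xI - H)$ expands as a sum indexed by elementary subgraphs $S$ of the underlying graph on $k$ vertices, where $S$ is a vertex-disjoint union of edges and cycles; the contribution of $S$ factorises as
\[
(-1)^{c(S)} \Bigl(\prod_{e \in S_{\text{match}}} |h_e|^{2}\Bigr) \Bigl(\prod_{C \in S_{\text{cyc}}} \bigl(W(M_C) + \overline{W(M_C)}\bigr)\Bigr)
= (-1)^{c(S)} \prod_{C \in S_{\text{cyc}}} 2\,\mathfrak{R}(W(M_C)),
\]
since every non-zero entry of $H(M_G)$ has modulus $1$.

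First I would state this Sachs-type formula explicitly for both $H(M_G)$ and $H(M'_G)$. Because $M_G$ and $M'_G$ share the same underlying graph $G$, they index their characteristic polynomial expansions over the \emph{same} collection of elementary subgraphs. Next, comparing term-by-term, the only data entering a given term are the quantities $|h_e|^{2}$ (which equal $1$ in both graphs, by definition of $H$) and the values $2\,\mathfrak{R}(W(\cdot))$ on the cycles of $S$. By hypothesis $\mathfrak{R}(W(M_C)) = \mathfrak{R}(W(M'_C))$ for every cycle $C$ of $G$, so each Sachs term for $H(M_G)$ agrees with the corresponding term for $H(M'_G)$. Summing over all $S$ yields $\det(xI - H(M_G)) = \det(xI - H(M'_G))$, which is exactly cospectrality.

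The only real obstacle is cleanly justifying the Hermitian Sachs formula itself. If one does not simply cite Reff, the direct route is to expand $\det(xI - H(M_G))$ via the permutation definition of the determinant and group permutations by cycle type: fixed points supply the $x$-powers, transpositions supply factors $h_{ij}h_{ji} = |h_{ij}|^{2} = 1$, and each permutation cycle of length $\geq 3$ must trace out a cycle of $G$ and pairs up with its reverse to contribute $W(M_C) + \overline{W(M_C)} = 2\,\mathfrak{R}(W(M_C))$. Everything else is bookkeeping of signs, from which the formula above emerges. Once the formula is in hand, the corollary is immediate from the hypothesis.
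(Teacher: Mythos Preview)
Your proposal is correct and follows the same approach as the paper, which simply presents the statement as a direct corollary of Reff's Lemma~2.2 without further argument. You have supplied precisely the missing detail: the Sachs-type coefficient formula shows that the characteristic polynomial of $H(M_G)$ depends on the underlying graph and the real parts $\mathfrak{R}(W(M_C))$ of the cycle weights only, which immediately yields the conclusion.
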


Considering mixed graphs with no cycle, by Corollary \ref{coro:1}, we have the following result.
\begin{corollary} \label{coro:1.2}
A mixed tree $M_T$ is cospectral to its underlying graph $T$.
\end{corollary}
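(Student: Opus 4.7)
The plan is to apply Corollary \ref{coro:1} directly, viewing the underlying tree $T$ as the mixed graph $M'_T$ in which every edge is undirected (so that $H(M'_T)$ is the ordinary adjacency matrix of $T$). Since $M_T$ and this $M'_T$ have the same underlying graph $T$, Corollary \ref{coro:1} applies once we verify the hypothesis that $\mathfrak{R}(W(M_C)) = \mathfrak{R}(W(M'_C))$ for every cycle $C$ of $T$.

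The main (and only) observation is that a tree contains no cycles at all. Hence the universally quantified hypothesis of Corollary \ref{coro:1} is vacuously true: there is no cycle $C$ in $T$ for which the equality of real parts of weights could fail. Applying Corollary \ref{coro:1} immediately yields that $M_T$ and $T$ are cospectral, which is the desired conclusion.

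There is no real obstacle here; the statement is essentially a specialization of the preceding corollary to the cycle-free case. The only thing to be careful about is to state explicitly that the underlying graph $T$, regarded as a mixed graph with no arcs, has $H(T)$ equal to the usual $(0,1)$ adjacency matrix, so that cospectrality in the sense of Definition (\ref{def:2}) coincides with cospectrality to $T$ as an ordinary simple graph.
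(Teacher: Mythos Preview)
Your proposal is correct and matches the paper's own argument exactly: the paper likewise observes that a tree has no cycles and invokes Corollary~\ref{coro:1} directly. Your added remark that $H(T)$ coincides with the ordinary adjacency matrix is a helpful clarification but not something the paper spells out.
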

Since spectrum of some spectial trees (e.g. path, star) is known, by Corollary \ref{coro:1.2}, we can easily get the spectrum of corresponding mixed trees.

It is well known that similar matrices have the same eigenvalues.
\begin{lemma} \label{lem:1}
For any invertible matrix $P$, if $H(M'_G) = P^{-1} H(M_G) P$, then $M'_G$ and $M_G$ are cospetral.
\end{lemma}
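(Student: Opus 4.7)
The plan is to invoke the standard linear-algebra fact that similar matrices share the same characteristic polynomial, and therefore the same spectrum (including multiplicities). Since the spectrum of a mixed graph is by definition the spectrum of its Hermitian adjacency matrix, equality of the spectra of $H(M_G)$ and $H(M'_G)$ is exactly what cospectrality means here.

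Concretely, I would start from the assumption $H(M'_G) = P^{-1} H(M_G) P$ and compute the characteristic polynomial of $H(M'_G)$. Writing
\[
\det(\lambda I - H(M'_G)) = \det(\lambda I - P^{-1} H(M_G) P) = \det\bigl(P^{-1}(\lambda I - H(M_G)) P\bigr),
\]
and then applying multiplicativity of the determinant together with $\det(P^{-1}) \det(P) = 1$, the right-hand side collapses to $\det(\lambda I - H(M_G))$. Hence the two characteristic polynomials coincide, so the eigenvalues (with multiplicities) coincide, which is exactly the definition of $M_G$ and $M'_G$ being cospectral.

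There is no real obstacle here: the lemma is an immediate restatement of the fact that conjugation preserves the characteristic polynomial, and no graph-theoretic input is needed beyond the definition of the spectrum via $H(\cdot)$. The only thing worth a brief remark is that although $H(M_G)$ is Hermitian and the similarity $P$ is merely invertible (not necessarily unitary), this does not matter for the conclusion about eigenvalues; it would matter only if one wanted to preserve Hermiticity of the conjugated matrix, which is not asserted in the statement.
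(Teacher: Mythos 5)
Your proof is correct and is exactly the standard similarity argument the paper relies on: the paper simply cites the well-known fact that similar matrices have the same eigenvalues, and your characteristic-polynomial computation is the usual justification of that fact. Nothing more is needed.
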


Suppose that the vertex set of $M_G$ is partitioned into $k$ (possibly empty) sets,
\begin{equation} \label{eq:2.1}
  V(M_G) = V_1 \cup V_2 \cup \cdots \cup V_k.
\end{equation}

An arc $\overrightarrow{xy}$ or an undirected edge $\{x, y\}$ is said to be of type $(\omega ^ i, \omega ^ j)$ if $x \in V_i$ and $y \in V_j$, where $i, j \in  \{ 1, 2, \cdots, k \}$, $\omega ^ i \in \mathbb{T}_k$,  $\omega ^ j  \in \mathbb{T}_k$. The partition (\ref{eq:2.1}) is said to be \emph{admissible} if both of the following two conditions hold (refer to Fig. \ref{fig:threeWaySwith_Tk}(a)):
\\ (i) each undirected edge is of the type $(\omega ^ i, \; \omega ^ j)$, where $j - i \equiv 0$ or $1$ or $-1$ (mod $k$);
\\ (ii) each arc is of the type $(\omega ^ i, \; \omega ^ j)$, where $i - j \equiv 0$ or $1$ or $2$ (mod $k$).

A mixed graph is \emph{admissible} if and only if it has an admissible partition.

\begin{figure}[H]
\centering
\includegraphics[width=0.8\linewidth]{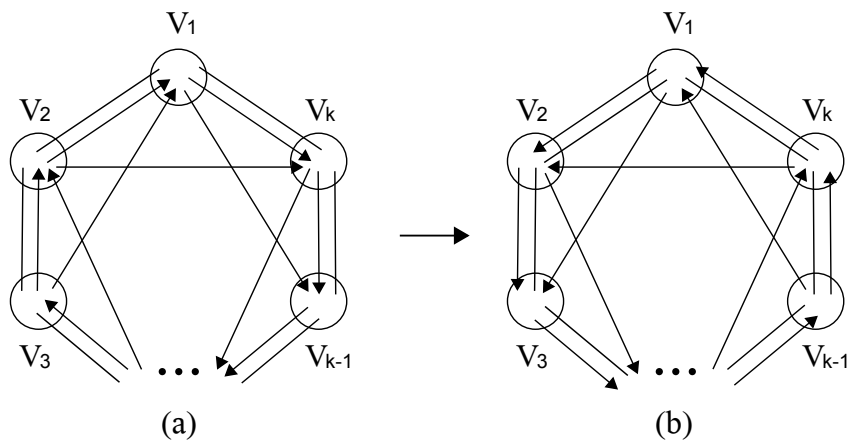}
\caption{ Three way switching for $k$ partitions with $\omega=cos(\frac{2\pi}{k})+ \textbf{\emph{i}} \;sin(\frac{ 2\pi}{k}) \in \mathbb{T}_k$. }
\label{fig:threeWaySwith_Tk}
\end{figure}

\textbf{Three-way switching for $\mathbb{T}_k$:}
\\ A \emph{three-way switching} for $\mathbb{T}_k$ with respect to the admissible partition (\ref{eq:2.1}) is the operation of changing $M_G$ into the mixed graph $M_G'$ by making the changes in what follows (see Fig.\ref{fig:threeWaySwith_Tk}):
\\ (i) replacing each undirected edge of type $(\omega ^ i, \; \omega ^ j)$ with an arc of type $(\omega ^ i, \; \omega ^ j)$, where $j - i \equiv 1$ (mod $k$);
\\ (ii) replacing each arc of type $(\omega ^ i, \; \omega ^ j)$ with an undirected edge, where $i - j \equiv 1$ (mod $k$);
\\ (iii) reversing the direction of each arc of type $(\omega ^ i, \; \omega ^ j)$, where $i - j \equiv 2$ (mod $k$).

Let $n_i = \left| V_i \right|$, where $i \in \{ 1, 2, \cdots, k \}$, then $\sum \limits_{i=1}^{k} n_i = \left| V(M_G) \right| = n$.
Let $D_i = d_i I_{n_i} = diag\{ d_i, d_i, \cdots, d_i \}$  be a $n_i * n_i$ diagonal matrix, where $d_i \neq 0$ and $I_{n_i}$ is a $n_i * n_i$ identity matrix. Let
\begin{eqnarray} \label{matrix:G1}
D \;\;=\;\;
\begin{bmatrix}
D_1 \quad & \quad & \quad & \quad  \\
\quad & D_2 \quad & \quad & \quad  \\
\quad & \quad & \ddots \quad & \quad  \\
\quad & \quad & \quad & D_k \quad
\end{bmatrix} \; .
\end{eqnarray}

Naturally, by Lemma \ref{lem:1}, we get the following corollary.
\begin{corollary} \label{coro:2}
 $M_G$ and $M'_G$ are cospectral if $ H(M'_G) = D^{-1} H(M_G) D$, where $D$ is defined as equation (\ref{matrix:G1}).
\end{corollary}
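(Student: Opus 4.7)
The statement is essentially a direct specialization of Lemma \ref{lem:1}, so the plan is to verify that Lemma \ref{lem:1} applies, i.e., that $D$ as defined in (\ref{matrix:G1}) is an invertible matrix.

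First I would observe that $D$ is a block diagonal matrix whose $i$-th diagonal block is $D_i = d_i I_{n_i}$. Since each $d_i$ is assumed to be nonzero, each block $D_i$ is invertible with inverse $d_i^{-1} I_{n_i}$, and therefore
\begin{equation*}
D^{-1} \;=\; \mathrm{diag}\bigl(d_1^{-1} I_{n_1},\, d_2^{-1} I_{n_2},\, \ldots,\, d_k^{-1} I_{n_k}\bigr)
\end{equation*}
exists. In particular, $D$ is an invertible $n \times n$ matrix, with $n = \sum_{i=1}^k n_i$.

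Having checked invertibility, I would then apply Lemma \ref{lem:1} with the choice $P = D$. The hypothesis of Lemma \ref{lem:1} requires an invertible matrix $P$ such that $H(M'_G) = P^{-1} H(M_G) P$; with $P = D$ this is exactly the hypothesis of Corollary \ref{coro:2}. The conclusion of Lemma \ref{lem:1} is that $M_G$ and $M'_G$ are cospectral, which is precisely the conclusion sought here.

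There is no real obstacle in the argument; the content of the corollary is simply to record the specific form of diagonal scaling matrix that will be used in the sequel (presumably with $d_i = \omega^{j_i}$ for appropriate exponents) to realize the three-way switching described in Fig.\ \ref{fig:threeWaySwith_Tk} as a similarity transformation. The work lies ahead in verifying that, for a suitable admissible partition and choice of $d_i$'s, the conjugate $D^{-1} H(M_G) D$ really equals $H(M'_G)$ for the switched graph $M'_G$; Corollary \ref{coro:2} itself is merely the machine into which that verification will be fed.
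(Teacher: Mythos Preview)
Your proposal is correct and matches the paper's approach: the paper simply states that the corollary follows ``naturally, by Lemma \ref{lem:1},'' without writing out any details. Your explicit verification that $D$ is invertible (because each $d_i\neq 0$) is a harmless elaboration of exactly this one-line deduction.
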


The entries of the matrix $H(M'_G)$ are given by $h'_{uv}:=d^{-1}_i h_{uv} d_j$, where $u \in V_i, v\in V_j$.

 By equation (\ref{def:2}), we know that an entry of an Hermitian adjacency matrix belongs to $\{ \omega, \overline{\omega}, 1, 0 \}$, i.e. $h_{uv},h'_{uv} \in \{ \omega, \overline{\omega}, 1, 0 \}$. Since $d^{-1}_i \neq 0$ and $d_j \neq 0$, therefore $h'_{uv} = 0$ if and only if $h_{uv} = 0$.

By Corollary \ref{coro:2}, we can obtain some sufficient conditions of cospectrality for two mixed graph with the same underlying graph. Let $d_i = \omega ^ {i - 1}$ for $i \in \{ 1, 2, \cdots, k \}$, in view of $h_{uv},h'_{uv} \in \{ \omega, \overline{\omega}, 1, 0 \}$, we try to find a cospectral transformation of changing $M_G$ into the mixed graph $M_G'$.

Recall that the three-way switching is only for the admissible mixed graphs.
\begin{theorem}\label{th:3.1}
If mixed graph $M_G'$ is obtained from mixed graph $M_G$ by the three-way switching, then $M_G'$ is cospectral with $M_G$.
\end{theorem}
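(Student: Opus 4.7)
My plan is to realize the three-way switching as the diagonal similarity transformation supplied by Corollary \ref{coro:2}, and then verify case by case that the resulting matrix coincides with $H(M_G')$.

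First, I would take the admissible partition $V(M_G)=V_1\cup V_2\cup\cdots\cup V_k$ that defines the switching and set $d_i=\omega^{i-1}$ for each $i\in\{1,2,\dots,k\}$, forming the diagonal matrix $D$ as in (\ref{matrix:G1}). With this choice, Corollary \ref{coro:2} tells me that the matrix $H'=D^{-1}H(M_G)D$ is cospectral with $H(M_G)$, and its entries are $h'_{uv}=\omega^{j-i}\,h_{uv}$ whenever $u\in V_i$ and $v\in V_j$. So it suffices to show that $H'$ equals the Hermitian adjacency matrix $H(M_G')$ of the mixed graph produced by the three-way switching.

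Next, I would run through every admissible type of edge or arc and check that $h'_{uv}$ is exactly the entry dictated by (\ref{def:2}) for $M_G'$. Concretely, the cases to verify are:
\begin{itemize}
\item Undirected edge with $j-i\equiv 0\pmod k$: $h_{uv}=1$ gives $h'_{uv}=\omega^0=1$ — the edge is left undirected, as claimed (these edges are not touched by the switching).
\item Undirected edge with $j-i\equiv 1\pmod k$: $h_{uv}=1$ gives $h'_{uv}=\omega$, i.e.\ an arc from $u$ to $v$, matching step (i).
\item Undirected edge with $j-i\equiv -1\pmod k$: $h'_{uv}=\overline{\omega}$, i.e.\ an arc from $v$ to $u$; this is the symmetric reading of step (i) with the roles of $i$ and $j$ swapped.
\item Arc $\overrightarrow{uv}$ with $i-j\equiv 0\pmod k$: $h_{uv}=\omega$ gives $h'_{uv}=\omega$, the arc is preserved, again consistent with the switching leaving same-part arcs alone.
\item Arc $\overrightarrow{uv}$ with $i-j\equiv 1\pmod k$: $h'_{uv}=\omega^{-1}\cdot\omega=1$, turning the arc into an undirected edge, matching step (ii).
\item Arc $\overrightarrow{uv}$ with $i-j\equiv 2\pmod k$: $h'_{uv}=\omega^{-2}\cdot\omega=\omega^{-1}=\overline{\omega}$, which flips the arc to $\overrightarrow{vu}$, matching step (iii).
\end{itemize}
Since admissibility guarantees that every edge or arc falls into one of these congruence classes, $H'$ and $H(M_G')$ agree entry by entry. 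Invoking Corollary \ref{coro:2} then finishes the argument.

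The only real obstacle is bookkeeping: I need to be careful that the single relation $h'_{uv}=\omega^{j-i}h_{uv}$ really does reproduce each of the three separate rules (i)–(iii), including the fact that the undirected-edge rule is symmetric under $i\leftrightarrow j$ and that ``same-part'' edges and arcs remain untouched. Once the $k$-th-root arithmetic $\omega^{-1}=\overline{\omega}$ is used in the right places, the verification is entirely mechanical.
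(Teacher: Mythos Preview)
Your proposal is correct and follows essentially the same route as the paper: define $D$ with $d_i=\omega^{i-1}$, compute $h'_{uv}=\omega^{j-i}h_{uv}$, check each admissible type against rules (i)--(iii), and conclude via Corollary~\ref{coro:2}. You are in fact slightly more thorough than the paper in explicitly noting that same-part edges and arcs (the $j-i\equiv 0$ cases) remain unchanged, which the paper leaves implicit.
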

\begin{proof}
Let $D$ be a diagonal matrix as shown in equation (\ref{matrix:G1}) and let $d_i = \omega ^ {i - 1}$.
Let $H(M_G)=(h_{ij})_{n \times n}$, $H(M'_G) = (h'_{ij})_{n \times n} = D^{-1} H(M_G) D$.
Let $V(M_G) = V_1 \cup V_2 \cup \cdots \cup V_k$ be  an admissible partition of $M_G$. Let $u \in V_i$, $v\in V_j$.
Recall that a three-way switching contains operations (i), (ii), (iii).
\\ For operation (i), suppose $uv$  is an  undirected edge in $M_G$, where $j - i \equiv 1$ (mod $k$).
By equation (\ref{def:2}), we have $h_{uv} = h_{vu} = 1$. Then $h'_{uv}:=d^{-1}_i h_{uv} d_j$ = $\omega  h_{uv}$ = $\omega $, $h'_{vu}:=d^{-1}_j h_{vu} d_i$ = $\omega ^{-1} h_{vu}$ = $\omega ^{-1}$.
That is,  $uv$ is an arc of type $(\omega ^ i, \; \omega ^ j)$ in $M'_G$.
\\ For operation (ii), suppose $uv$ is an arc of type $(\omega ^ i, \; \omega ^ j)$, where $i - j \equiv 1$ (mod $k$).
By equation (\ref{def:2}), we have $h_{uv} = \omega$ and $h_{vu} = \overline{\omega}$. Then $h'_{uv}:=d^{-1}_i h_{uv} d_j$ = $\omega ^{-1} h_{uv}$ = $1$, $h'_{vu}:=d^{-1}_j h_{vu} d_i$ = $\omega  h_{vu}$ = $1$.
That is,  $uv$ is an undirected edge in $M'_G$.
\\ For operation (iii), suppose $uv$ is an arc of type $(\omega ^ i, \; \omega ^ {j})$ in $M_G$, where $i - j \equiv 2$ (mod $k$).
By equation (\ref{def:2}), we have $h_{uv} = \omega$ and $h_{vu} = \overline{\omega}$. Then $h'_{uv}:=d^{-1}_i h_{uv} d_j$ = $\omega ^{-2} h_{uv}$ = $\omega ^{-1}$, $h'_{vu}:=d^{-1}_j h_{vu} d_i$ = $\omega ^{2} h_{vu}$ = $\omega$.
That is,  $uv$ is an arc of type $(\omega ^ j, \; \omega ^ i)$ in $M'_G$.

Therefore,  for a mixed graph which is admissible, the  three-way switching is
equivalent to the similar transformation $H(M'_G) = D^{-1} H(M_G) D$. By Corollary \ref{coro:2}, we infer that $M'_G$ is cospectral with $M_G$.
\end{proof}

By the proof of Theorem \ref{th:3.1}, we get the following corollary \ref{coro:2.0}.
\begin{corollary} \label{coro:2.0}
For a mixed graph $M_G$ which is admissible, a three-way switching transforming $M_G$ into $M_G'$
corresponds to a similar transformation $H(M'_G) = D^{-1} H(M_G) D$, where $D$ defined as equation (\ref{matrix:G1}).
\end{corollary}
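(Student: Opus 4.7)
The plan is to extract this correspondence directly from the computations already carried out in the proof of Theorem \ref{th:3.1}. There, with $D$ chosen so that $D_i = \omega^{i-1} I_{n_i}$, each of the three switching operations (i), (ii), (iii) was analyzed by evaluating $h'_{uv} = d_i^{-1} h_{uv} d_j$ on a generic edge or arc of the corresponding type, and in each case the resulting entry agreed with the edge or arc that the three-way switching would produce.

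To promote those three calculations into the full matrix identity $H(M'_G) = D^{-1} H(M_G) D$, I would supplement them by handling the cases the switching leaves untouched. By admissibility, the remaining in-graph pairs $(u,v)$ with $u \in V_i$, $v \in V_j$ are either undirected edges with $j - i \equiv 0 \pmod{k}$ or arcs with $i - j \equiv 0 \pmod{k}$. In both, $d_i = d_j$, so $h'_{uv} = d_i^{-1} h_{uv} d_i = h_{uv}$; trivially $h'_{uv} = 0$ whenever $h_{uv} = 0$. Together with the three explicit cases, this covers every entry of the matrix.

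The only point that requires care is bookkeeping across the two endpoint orderings of an undirected edge: an edge of type $(\omega^i, \omega^j)$ with $j - i \equiv -1 \pmod{k}$ is the same edge of type $(\omega^j, \omega^i)$ with $j - i \equiv 1 \pmod{k}$ after exchanging the endpoints, so one must check that the values of $h'_{uv}$ and $h'_{vu}$ remain complex conjugates. Because each $d_i$ lies on the unit circle, $d_j^{-1} h_{vu} d_i = \overline{d_i^{-1} h_{uv} d_j}$, which preserves Hermiticity, and the correspondence follows. I do not expect any substantive obstacle here; the corollary is essentially a repackaging of the case analysis already completed in the proof of Theorem \ref{th:3.1}, with the trivial same-block cases added in.
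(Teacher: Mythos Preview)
Your proposal is correct and follows the paper's own approach: the paper states the corollary as an immediate consequence of the proof of Theorem~\ref{th:3.1}, and your plan is precisely to read off the matrix identity from the entrywise computations carried out there. Your explicit handling of the same-block edges and arcs (where $i\equiv j\pmod k$) and of the zero entries is a welcome bit of completeness that the paper leaves implicit, but the route is the same.
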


\textbf{Two-way switching for $\mathbb{T}_k$:}
\\ In particular, suppose that the vertex set of $M_G$ is partitioned into $2$ sets, i.e.
$
  V(M_G) = V_1 \cup V_2
$
, then equation (\ref{matrix:G1}) becomes
$
\begin{bmatrix}
D_1 \quad & \quad   \\
\quad & D_2
\end{bmatrix} \;
$.
If $M_G$ contains no arc $\overrightarrow{u_2 u_1}$ with $u_1 \in V_1$ and $u_2 \in V_2$, then a \emph{two-way switching} for $\mathbb{T}_k$ is said to be the operation of making two kinds of changes as follow (see Fig.\ref{fig:twoWaySwith_Tk}):
\\ (i) replaces every arc $\overrightarrow{u_1 u_2}$ with an undirected edge $\{u_1, u_2\}$, where $u_1 \in V_1$ and $u_2 \in V_2$;
\\ (ii) replaces every undirected edge $\{u_1, u_2\}$ with an arc $\overrightarrow{u_2 u_1}$, where $u_1 \in V_1$ and $u_2 \in V_2$.

\begin{figure}[H]
\centering
\includegraphics[width=0.6\linewidth]{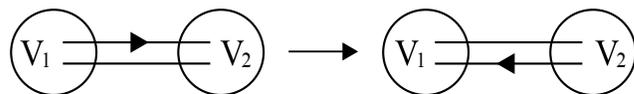}
\caption{ Two-way switching for $2$ partitions with $\omega=cos(\frac{2\pi}{k})+ \textbf{\emph{i}} \;sin(\frac{ 2\pi}{k}) \in \mathbb{T}_k$. }
\label{fig:twoWaySwith_Tk}
\end{figure}

\begin{corollary}\label{th:3.2}
The mixed graph $M_G'$ obtained from $M_G$ by the two-way switching is cospectral with $M_G$.
\end{corollary}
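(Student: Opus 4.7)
The plan is to realize the two-way switching as a diagonal similarity transformation and invoke Corollary~\ref{coro:2}. Concretely, I would take $D$ as in equation~(\ref{matrix:G1}) with only two blocks, setting $d_1 = 1$ and $d_2 = \overline{\omega}$ (so $D_1 = I_{n_1}$ and $D_2 = \overline{\omega}\, I_{n_2}$), and then compute $H(M'_G) := D^{-1} H(M_G) D$ entry by entry.

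For $u,v$ in the same block $V_i$, one has $h'_{uv} = d_i^{-1} h_{uv} d_i = h_{uv}$, so all edges and arcs inside $V_1$ or inside $V_2$ are preserved, consistent with the fact that the two-way switching touches only the edges between blocks. For the remaining case $u_1 \in V_1$ and $u_2 \in V_2$, two subcases must be verified. If $\overrightarrow{u_1 u_2}$ is an arc in $M_G$, then $h_{u_1 u_2} = \omega$ and $h'_{u_1 u_2} = d_1^{-1}\, \omega\, d_2 = \omega \cdot \overline{\omega} = 1$, so $\{u_1, u_2\}$ becomes undirected in $M'_G$; this reproduces operation (i). If $\{u_1, u_2\}$ is undirected in $M_G$, then $h_{u_1 u_2} = 1$ and $h'_{u_1 u_2} = d_1^{-1} \cdot 1 \cdot d_2 = \overline{\omega}$, so the edge becomes the arc $\overrightarrow{u_2 u_1}$ in $M'_G$; this reproduces operation (ii). These two short computations exhaust the changes produced by the conjugation by $D$.

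The one delicate point, which I expect to be the main obstacle worth flagging, is the role of the hypothesis that $M_G$ contains no arc $\overrightarrow{u_2 u_1}$ with $u_1 \in V_1$ and $u_2 \in V_2$. This hypothesis is precisely what guarantees that $H(M'_G)$ is again the Hermitian adjacency matrix of a genuine mixed graph, i.e.\ that every entry stays in $\{\omega, \overline{\omega}, 1, 0\}$. Indeed, a reverse arc $\overrightarrow{u_2 u_1}$ would produce $h'_{u_2 u_1} = d_2^{-1}\, \omega\, d_1 = \omega^2$, which for $k \geq 4$ is not an admissible entry of $H$. With this observation in place, $H(M'_G) = D^{-1} H(M_G) D$ is a legitimate Hermitian adjacency matrix that realizes the two-way switching, and Corollary~\ref{coro:2} immediately delivers the cospectrality of $M'_G$ and $M_G$.
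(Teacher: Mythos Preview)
Your proof is correct and follows precisely the approach the paper intends. The paper does not spell out a proof of Corollary~\ref{th:3.2}, but its setup (introducing the $2$-block diagonal matrix $\begin{bmatrix} D_1 & \\ & D_2 \end{bmatrix}$ immediately before the corollary) makes clear that the intended argument is diagonal similarity via Corollary~\ref{coro:2}, exactly as you carry out with $d_1 = 1$, $d_2 = \overline{\omega}$; your explicit entry-by-entry verification and your identification of why the ``no reverse arc'' hypothesis is needed simply fill in the details the paper omits.
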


By setting $k=6$ and $\omega=cos(\frac{2\pi}{6})+ \textbf{\emph{i}} \;sin(\frac{ 2\pi}{6})$ in theorem \ref{th:3.2}, \ref{th:3.1}, we can get the two-way switching and three-way switching results in \cite{2021Hermitian}.

\par \quad The converse of $M_G$, denoted by $M_G^c$, is obtained by reversing all the arcs of $M_G$.

Given a mixed graph $M_G$, let $M_G^c$ be its converse(the mixed graph obtained by reversing all the arcs of $M_G$). By (\ref{def:2}), we get $H(M_G^c) = H(M_G)^T$ , this implies the following result.

\begin{theorem}\label{th:4.3}
A mixed graph $M_G$ and its converse $M_G^c$ are cospectral.
\end{theorem}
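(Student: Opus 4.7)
The plan is to reduce the claim to the matrix identity $H(M_G^c) = H(M_G)^T$ already observed just before the statement, and then use a generic linear-algebra fact that a matrix and its transpose share the same spectrum.

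First I would verify the identity $H(M_G^c) = H(M_G)^T$ carefully from definition (\ref{def:2}). For each ordered pair $(u_i,u_j)$ there are four cases: an arc $\overrightarrow{u_iu_j}$ in $M_G$ becomes $\overrightarrow{u_ju_i}$ in $M_G^c$, so the $(i,j)$-entry changes from $\omega$ to $\overline{\omega}$ and the $(j,i)$-entry from $\overline{\omega}$ to $\omega$, which is exactly transposition because $H(M_G)$ is Hermitian (so $h_{ji}=\overline{h_{ij}}$, i.e.\ $h_{ji}=\omega$ when $h_{ij}=\overline{\omega}$). An undirected edge and a non-edge are left untouched by the converse operation, and their entries ($1$ and $0$, respectively) are real, hence invariant under transposition. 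This confirms $H(M_G^c)=H(M_G)^T$ in all cases.

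Next I would invoke the elementary identity
\begin{equation*}
\det\bigl(\lambda I - H(M_G^c)\bigr) = \det\bigl(\lambda I - H(M_G)^T\bigr) = \det\bigl((\lambda I - H(M_G))^T\bigr) = \det\bigl(\lambda I - H(M_G)\bigr),
\end{equation*}
using that $\det(A) = \det(A^T)$ for any square matrix $A$. Thus $H(M_G)$ and $H(M_G^c)$ have the same characteristic polynomial, so $M_G$ and $M_G^c$ are cospectral. (Alternatively, since $H(M_G)$ is Hermitian one can note $H(M_G)^T = \overline{H(M_G)}$ and conjugate the eigen-equation, using that the eigenvalues are real, to conclude they coincide; either route works.)

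There is no real obstacle here: the verification of $H(M_G^c)=H(M_G)^T$ is pure bookkeeping from the definition, and the passage to equal spectra is a one-line determinant computation. The only thing to be careful about is making sure all four entry types in (\ref{def:2}) are checked, particularly that the Hermitian relation $h_{ji}=\overline{h_{ij}}$ is used to match the reversed arc in $M_G^c$ with the transposed entry of $H(M_G)$.
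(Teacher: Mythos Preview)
Your proposal is correct and follows essentially the same approach as the paper: the paper simply notes that by definition (\ref{def:2}) one has $H(M_G^c)=H(M_G)^T$, from which cospectrality is immediate. Your write-up is just a more detailed expansion of that one-line observation.
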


\par \quad If a mixed graph $M_G$ can be turned into another mixed graph $M'_G$ after a sequence of three-way switchings and operations of taking the converse, then we say $M_G$ and $M'_G$ are switching equivalent.

\par \quad Theorem 4.1 of \cite{Bojan2016Hermitian} characterizes the equivalent conditions for the mixed graph being cospectral to its underlying graph, for $\omega = cos(\frac{2\pi}{4})+ \textbf{\emph{i}} \;sin(\frac{ 2\pi}{4}) = i$ in (\ref{def:2}). We can get a similar result for $\omega=cos(\frac{2\pi}{k})+ \textbf{\emph{i}} \;sin(\frac{ 2\pi}{k}), k\geq 3$  as follow.

\begin{theorem}\label{th:4.4}
Let $G$ be a connected simple graph of order $n$ and let $M_{G_1}$ be a mixed graph whose underlying graph $G_1$ is a spanning subgraph of $G$. Then the following statements are equivalent:
\\ (a) $G$ and $M_{G_1}$ are cospectral.
\\ (b) $\lambda_1(G) = \lambda_1(M_{G_1})$.
\\ (c) $G_1 = G $, and the vertex set of $M_{G_1}$ has a partition $V_1 \cup V_2 \cup \cdots \cup V_k$ such that the following holds: every undirected edge $uv$ of $M_{G_1}$ satisfies $u \in V_j$ and $v \in V_j$ for some $j \in \{ 1, 2, \cdots, k \}$; and every arc $\overrightarrow{uv}$ of $M_{G_1}$ satisfies $u \in V_r$ and $v \in V_s$ for some $r,s \in \{ 1, 2, \cdots, k \}$, where $r - s \equiv 1$ (mod $k$).
\\ (d) $G$ and $M_{G_1}$ are switching equivalent.
\end{theorem}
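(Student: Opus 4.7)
My plan is to prove the cycle of implications $(a) \Rightarrow (b) \Rightarrow (c) \Rightarrow (d) \Rightarrow (a)$. Two of these steps are essentially immediate from material already in the paper. The implication $(a) \Rightarrow (b)$ is trivial, since cospectrality forces equality of every eigenvalue, including the largest. The implication $(d) \Rightarrow (a)$ follows by iterating Theorem \ref{th:3.1} (three-way switching preserves the spectrum) together with Theorem \ref{th:4.3} (the converse $M_G^c$ is cospectral with $M_G$), which are precisely the operations that generate switching equivalence.

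For $(c) \Rightarrow (d)$, I apply a single three-way switching with respect to the partition supplied by (c). This partition is admissible in the sense of Section 2: every undirected edge sits inside a single part (so $j - i \equiv 0 \pmod k$) and every arc $\overrightarrow{uv}$ satisfies $r - s \equiv 1 \pmod k$. Inspecting the three operations defining the three-way switching, operations (i) and (iii) are vacuous, while operation (ii) replaces every arc by an undirected edge. The output is the undirected graph with underlying graph $G_1 = G$, so $M_{G_1}$ is switching equivalent to $G$.

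The heart of the theorem is $(b) \Rightarrow (c)$, which I will establish by a Perron--Frobenius argument. Let $A = H(M_{G_1})$ and let $|A|$ denote the entrywise modulus, so that $|A| = A(G_1)$. For a unit eigenvector $x$ of $A$ associated with $\lambda_1(M_{G_1})$, one has the chain
\[
\lambda_1(M_{G_1}) \;=\; x^* A x \;\leq\; |x|^T |A| |x| \;\leq\; \lambda_1(A(G_1)) \;\leq\; \lambda_1(A(G)),
\]
and hypothesis (b) forces equality throughout. The rightmost equality, combined with the standard fact that a proper spanning subgraph of a connected graph has strictly smaller spectral radius, gives $G_1 = G$. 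The middle equality forces $|x|$ to be the positive Perron eigenvector of $A(G)$, so in particular $x_v \neq 0$ for every $v$. The leftmost equality is the crucial one: expanding $x^* A x$ edge by edge shows that each contribution $2\,\mathfrak{R}(h_{uv}\overline{x_u} x_v)$ must equal $2|x_u||x_v|$, so $h_{uv}\overline{x_u}x_v$ is a positive real number. Writing $x_v = |x_v|\, e^{\mathbf{i}\theta_v}$, this yields $\theta_u \equiv \theta_v \pmod{2\pi}$ across every undirected edge, and $\theta_v - \theta_u \equiv -\frac{2\pi}{k} \pmod{2\pi}$ across every arc $\overrightarrow{uv}$. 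Since $G$ is connected, all the $\theta_v$ lie in the single coset $\{\theta_0 + \frac{2\pi j}{k} : 0 \le j \le k-1\}$ for any fixed base phase $\theta_0$. Defining $V_j := \{v : \theta_v \equiv \theta_0 + \frac{2\pi(j-1)}{k} \pmod{2\pi}\}$ then yields a partition in which every undirected edge lies inside one $V_j$ and every arc from $V_r$ to $V_s$ satisfies $r - s \equiv 1 \pmod k$, which is exactly condition (c).

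The principal obstacle is this last implication: one must check that the three inequalities above saturate simultaneously and then convert the resulting global real-positivity of each edge contribution into a discrete phase quantization on the Perron vector, propagating these phases consistently around the connected graph $G$. The remaining steps of the cycle are essentially bookkeeping with the switching operations already developed in Section 2.
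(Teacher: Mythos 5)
Your proposal is correct and follows essentially the same route as the paper: the cycle $(a)\Rightarrow(b)$, $(c)\Rightarrow(d)$ via a single three-way switching, $(d)\Rightarrow(a)$ via Theorems \ref{th:3.1} and \ref{th:4.3}, and the core implication $(b)\Rightarrow(c)$ by the Rayleigh-quotient chain, the strict monotonicity for proper spanning subgraphs, and phase propagation along the connected graph. Your explicit use of the Perron eigenvector to guarantee $x_v\neq 0$ for all $v$ is a small refinement (the paper leaves this point implicit) but not a different approach.
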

\begin{proof}
It is obvious that (a) implies (b) and (d) implies (a). By theorem \ref{th:3.1}, (c) implies (d). We just need to prove that (b) implies (c).
Assume that (b) holds. Let $H(G_1)=(a_{ij})_{n \times n}$, $H(M_{G_1}) = (h_{ij})_{n \times n}$. Let $\textbf{x} =(x_1, ..., x_n)^T \in C^n$ be a normalized eigenvector of $H(G_1)$ corresponding to $\lambda_1(G_1)$, let $\textbf{y} =(y_1, ..., y_n)^T \in C^n$ be a normalized eigenvector of $H(M_{G_1})$ corresponding to $\lambda_1(M_{G_1})$. We have
\begin{eqnarray}
\nonumber  \lambda_1(M_{G_1}) &=& \overline{\textbf{y}^T} \cdot H(M_{G_1}) \cdot \textbf{y} \\
\nonumber   &=& \sum \limits_{ u \in V}^{} \sum \limits_{ v \in V}^{} \overline{y_u} h_{uv} y_v \\
\label{eq:4.1.1}   &\leq & \sum \limits_{ u \in V}^{} \sum \limits_{ v \in V}^{} \left|  \overline{y_u} h_{uv} y_v \right| \\
\nonumber   &= & \sum \limits_{ u \in V}^{} \sum \limits_{ v \in V}^{} \left|  \overline{y_u}  y_v \right| \cdot \left| h_{uv} \right| \\
\nonumber   &= & \sum \limits_{ u \in V}^{} \sum \limits_{ v \in V}^{} \left|  \overline{y_u}  y_v \right| \cdot a_{uv} \qquad ( \left| h_{uv} \right| = a_{uv}, \; since \;  h_{uv}  \in \{ \omega, \overline{\omega}, 1, 0 \} ) \\
\label{eq:4.1.2}    &\leq & \sum \limits_{ u \in V}^{} \sum \limits_{ v \in V}^{}  \overline{x_u}  x_v  \cdot a_{uv} = \lambda_1(G_1)
\end{eqnarray}

Since $G_1$ is a spanning subgraph of $G$, we get $\lambda_1(G_1) \leq \lambda_1(G)$. Combined with (b), i.e. $\lambda_1(G) = \lambda_1(M_{G_1})$, we  get $\lambda_1(G) = \lambda_1(M_{G_1}) \leq \lambda_1(G_1) \leq \lambda_1(G)$. Hence
\begin{equation}\label{eq:4.2}
  \lambda_1(M_{G_1}) = \lambda_1(G) = \lambda_1(G_1).
\end{equation}
It is well known that $\lambda_1(G_1) < \lambda_1(G)$ if $G_1$ is a proper subgraph of a connected graph $G$. Therefore, $G_1=G$.

The equality in (\ref{eq:4.1.1}) holds requires
\begin{equation}\label{eq:4.3}
  \overline{y_u} h_{uv} y_v = \left|  \overline{y_u} h_{uv} y_v \right|
\end{equation}
for every edge $uv$. Without loss of generality, we can assume that $y_1 \in \mathbb{R}^+$ for $\textbf{y} \neq 0$, then $y_1/|y_1| = 1$.
Considering equation (\ref{eq:4.3}), we infer that if $v_i \in N^0_{M_{G_1}}(v_1)$, then $h_{1i}=1$, $y_i/|y_i| = 1$; if $v_i \in N^+_{M_{G_1}}(v_1)$, then $h_{1i}=\omega$, $y_i/|y_i| = \overline{\omega}$; if $v_i \in N^-_{M_{G_1}}(v_1)$, then $h_{1i}=\overline{\omega}$, $y_i/|y_i| = \omega$.
\par \quad Note that $G_1$ is connected. By repeating the above steps, we have $y_i/|y_i| \in \mathbb{T}_k$ for $i \in \{1, \cdots, n \}$.
Let $V_j= \{v_i \in V(M_G) :y_i/|y_i| = \omega ^ j\}$, $j \in \{ 1, 2, \cdots, k \}$. Then they form a partition of $V(M_G)$, and satisfy the requirements of (c).
\end{proof}

\section{Switching equivalence and cospectrality for $\mathbb{T}_3$ }

For $\mathbb{T}_3$, we have more simple results than for $\mathbb{T}_k$ .

\textbf{Three-way switching for $\mathbb{T}_3$:}
\\ Suppose that the vertex set of $M_G$ is partitioned into $3$ (possibly empty) sets, i.e.
$
  V(M_G) = V_1 \cup V_2 \cup V_3
$. The \emph{three-way switching} for $\mathbb{T}_3$ is as follows (see Fig.\ref{fig:threeWaySwith}):
\\ (i) replacing each undirected edge of type $(\omega ^ i, \; \omega ^ j)$ with an arc of type $(\omega ^ i, \; \omega ^ j)$, where $j - i \equiv 1$ (mod $3$);
\\ (ii) replacing each arc of type $(\omega ^ i, \; \omega ^ j)$ with an undirected edge, where $i - j \equiv 1$ (mod $3$);
\\ (iii) reversing the direction of each arc of type $(\omega ^ i, \; \omega ^ j)$, where $i - j \equiv 2$ (mod $3$).

\begin{figure}[H]
\centering
\includegraphics[width=0.6\linewidth]{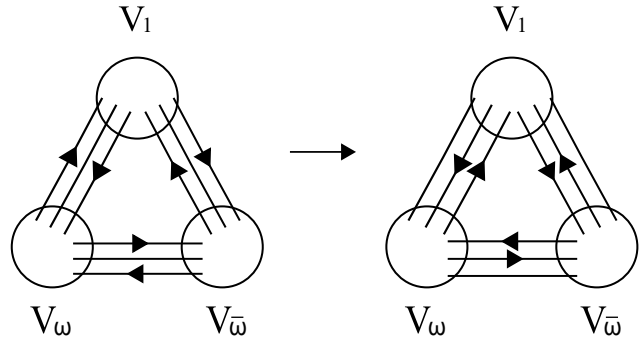}
\caption{ Three-way switching for three partitions with $\omega=-\frac{1}{2}+ \textbf{\emph{i}} \;\frac{\sqrt{3}}{2} \in \mathbb{T}_3$. }
\label{fig:threeWaySwith}
\end{figure}

\par \qquad Suppose that the vertex set of $M_G$ is partitioned into $2$ sets, i.e.
$
  V(M_G) = V_1 \cup V_2
$. Then the three-way switching for $\mathbb{T}_3$ is shown as  Fig.\ref{fig:twoWaySwith}.

\begin{figure}[H]
\centering
\includegraphics[width=0.6\linewidth]{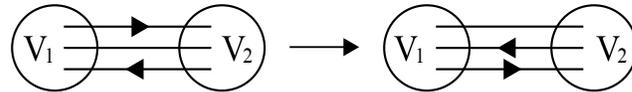}
\caption{ Three-way switching for two partitions with $\omega=-\frac{1}{2}+ \textbf{\emph{i}} \;\frac{\sqrt{3}}{2} \in \mathbb{T}_3$. }
\label{fig:twoWaySwith}
\end{figure}

By Theorem \ref{th:3.1}, we can get the following corollary immediately.
\begin{corollary}\label{coro:3.1}
Let $M_G$ be a mixed graph with $\omega=-\frac{1}{2}+ \textbf{\emph{i}} \;\frac{\sqrt{3}}{2} \in \mathbb{T}_3$. If mixed graph $M_G'$ is obtained from $M_G$ by the three-way switching, then $M_G'$ is cospectral with $M_G$.
\end{corollary}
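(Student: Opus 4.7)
The statement is a direct specialization of Theorem \ref{th:3.1} to the case $k = 3$, so the plan is simply to observe that all the ingredients of that theorem carry over verbatim when we fix $\omega = -\frac{1}{2} + \mathbf{\emph{i}}\,\frac{\sqrt{3}}{2}$. First I would note that $\mathbb{T}_3 = \{1, \omega, \overline{\omega}\}$ is the subgroup of $\mathbb{T}$ with $k = 3$, so the Hermitian adjacency matrix $H(M_G)$ defined by (\ref{def:2}) still has entries in $\{\omega, \overline{\omega}, 1, 0\}$ and remains Hermitian.

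Next I would check that the three-way switching for $\mathbb{T}_3$ introduced above matches, step for step, the three-way switching for $\mathbb{T}_k$ used in Theorem \ref{th:3.1}: operations (i), (ii), (iii) in the $\mathbb{T}_3$ version are identical to those in the $\mathbb{T}_k$ version, with the congruences taken modulo $3$ instead of modulo $k$. In particular, the underlying vertex partition $V(M_G) = V_1 \cup V_2 \cup V_3$ is admissible in the sense of (\ref{eq:2.1}) with $k = 3$, since both conditions (i) and (ii) in the admissibility definition become tautologies when $k = 3$ (every residue mod $3$ is $0$, $\pm 1$, or $\pm 2$).

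The plan is therefore to invoke Theorem \ref{th:3.1} directly with $k = 3$ and $d_i = \omega^{i-1}$ for $i \in \{1, 2, 3\}$. By that theorem, the resulting switching is equivalent to the similarity $H(M'_G) = D^{-1} H(M_G) D$ for the diagonal matrix $D$ in (\ref{matrix:G1}), and Corollary \ref{coro:2} then yields the cospectrality of $M'_G$ and $M_G$. There is no genuine obstacle here; the only thing to verify is the notational match between the $\mathbb{T}_k$ setup and the $\mathbb{T}_3$ setup, after which the corollary follows immediately.
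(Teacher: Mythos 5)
Your proposal is correct and matches the paper, which likewise obtains Corollary \ref{coro:3.1} as an immediate specialization of Theorem \ref{th:3.1} to $k=3$. Your added observation that the admissibility conditions are vacuous when $k=3$ (so any partition $V_1\cup V_2\cup V_3$ works) is accurate and is exactly what makes the $\mathbb{T}_3$ case "simpler" as the paper claims, though the paper leaves it implicit.
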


\begin{remark}
Notice that the cospectrality of two fixed graph depends on the value of $\omega$. For example, let $C'_6$ denotes a $6$-vertex mixed graph whose underlying graph is $C_6$ and all edges are arcs in one direction.
By the above results, we have that $C_6$ and $C'_6$ are cospectral when $\omega=cos(\frac{2\pi}{3})+ \textbf{\emph{i}} \;sin(\frac{ 2\pi}{3})$ or $\omega=cos(\frac{2\pi}{6})+ \textbf{\emph{i}} \;sin(\frac{ 2\pi}{6})$, but not cospectral when $\omega=cos(\frac{2\pi}{4})+ \textbf{\emph{i}} \;sin(\frac{ 2\pi}{4})$ or $\omega=cos(\frac{2\pi}{5})+ \textbf{\emph{i}} \;sin(\frac{ 2\pi}{5})$.
\end{remark}

\section{An upper bound for the spectral radius}

For a mixed graph $M_G$ with $\omega=\frac{1}{2}+ \textbf{\emph{i}} \;\frac{\sqrt{3}}{2} \in \mathbb{T}_6$, Theorem 3.1 of \cite{2021Hermitian} shows that  $\rho (M_G) \leq \Delta(G)$, where $\rho (M_G)$ and $\Delta(G)$  denote the spectral radius and the maximum degree of $M_G$, respectively.
And more generally, for $M_G$ with $\omega=cos(\frac{2\pi}{k})+ \textbf{\emph{i}} \;sin(\frac{ 2\pi}{k}), k\geq 3$, we also get that $\rho (M_G) \leq \Delta(G)$ and the
 extremal graphs can be partitioned as the form in Fig. \ref{fig:mixedGraph_Tk}(a).

\begin{figure}[H]
\centering
\includegraphics[width=0.6\linewidth]{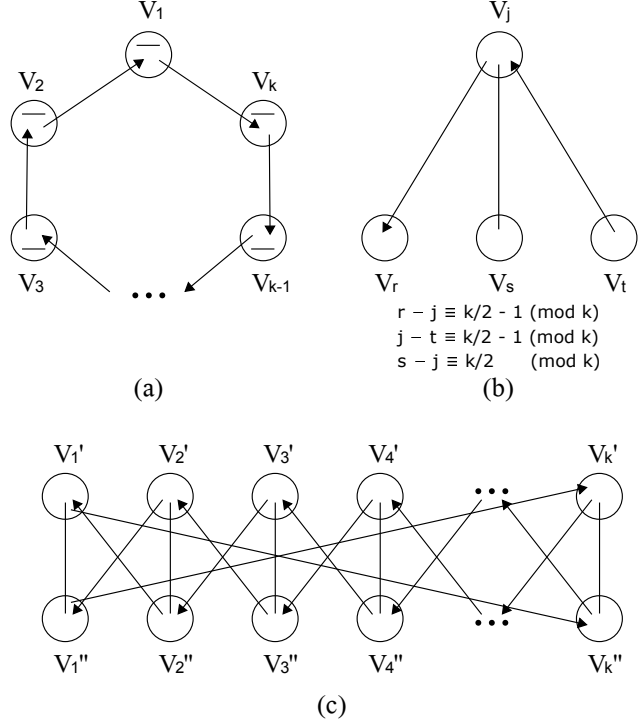}
\caption{ Partition for Theorem \ref{th:2.1} }
\label{fig:mixedGraph_Tk}
\end{figure}

\begin{theorem}\label{th:2.1}
Let $M_G$ be a connected mixed graph with $\omega=cos(\frac{2\pi}{k})+ \textbf{\emph{i}} \;sin(\frac{ 2\pi}{k}), k\geq 3$. Let $\lambda$ be an arbitrary eigenvalue of $H(M_G)$,
then  $ |\lambda| \leq \Delta(G) $.
Let $\rho(M_G) = max \{ |\lambda| \}$ denote the spectral radius of $M_G$.
Then $\rho(M_G) = \Delta(G)$ if and only if $G$ is $\Delta(G)$-regular and one can partition $V(M_G)$ into $k$ (possibly empty) parts $V_{1}, V_{2}, \cdots, V_{k}$ such that one of the followings holds:

(i)
Every undirected edge $uv$ of $M_G$ satisfies $u \in V_j$ and $v \in V_j$ for some $j \in \{ 1, 2, \cdots, k \}$; and every arc $\overrightarrow{uv}$ of $M_G$ satisfies $u \in V_r$ and $v \in V_s$ for some $r,s \in \{ 1, 2, \cdots, k \}$, where $r - s \equiv 1$ (mod $k$). (see Fig. \ref{fig:mixedGraph_Tk}(a).)

(ii)
$k$ is even. The induced mixed graph $M_G[V_j]$ is an independent set for all $j \in \{ 1, 2, \cdots, k \}$; every undirected edge $uv$ of $M_G$ satisfies $u \in V_j$ and $v \in V_{s}$ for some $j,s \in \{ 1, 2, \cdots, k \}$, where $s - j \equiv \frac{k}{2}$ (mod $k$); and every arc $\overrightarrow{uv}$ of $M_G$ satisfies $u \in V_j$ and $v \in V_r$ for some $j,r \in \{ 1, 2, \cdots, k \}$, where $r - j \equiv \frac{k}{2} - 1$ (mod $k$). (Fig. \ref{fig:mixedGraph_Tk}(b) shows the edges joining the set $V_j$.)

(iii)
$k$ is odd. $V(M_G)$ can be partitioned into $2k$ (possibly empty) parts $V'_{1}, V'_{2}, \cdots, V'_{k}, V''_{1}, V''_{2}, \cdots, V''_{k}$, such that: every undirected edge $uv$ of $M_G$ satisfies $u \in V'_j$ and $v \in V''_j$ for some $j \in \{ 1, 2, \cdots, k \}$; and every arc $\overrightarrow{uv}$ of $M_G$ satisfies $u \in V'_r$, $v \in V''_s$ (or $u \in V''_r$, $v \in V'_s$) for some $r,s \in \{ 1, 2, \cdots, k \}$, where $r - s \equiv 1$ (mod $k$). (see Fig. \ref{fig:mixedGraph_Tk}(c))
\end{theorem}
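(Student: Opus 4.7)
My plan is to prove the inequality by the standard Rayleigh-quotient / triangle-inequality argument already used in the proof of Theorem \ref{th:4.4}, and then to recover the three structural descriptions by tracking carefully when each intermediate inequality becomes an equality.

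First I would take a unit eigenvector $\mathbf{y}$ for $\lambda$, and observe that since every nonzero entry of $H(M_G)$ lies in $\mathbb{T}_k$ and therefore has modulus equal to the corresponding entry of the real symmetric adjacency matrix $A(G)$, the triangle inequality yields
\begin{equation*}
|\lambda|=\Bigl|\sum_{u,v}\overline{y_u}\,h_{uv}\,y_v\Bigr|\le |\mathbf{y}|^{T} A(G)\,|\mathbf{y}|\le \lambda_{1}(A(G))\le \Delta(G),
\end{equation*}
where $|\mathbf{y}|$ denotes the entrywise modulus. For the equality case $|\lambda|=\Delta(G)$, all three intermediate inequalities must be tight: the rightmost equality combined with connectedness forces $G$ to be $\Delta(G)$-regular; the middle equality together with the Perron--Frobenius theorem forces $|\mathbf{y}|$ to be the constant positive Perron vector of $A(G)$; and equality in the triangle inequality forces every summand $\overline{y_u}h_{uv}y_v$ with $\{u,v\}\in E(G)$ to share the common argument $\arg(\lambda)\in\{0,\pi\}$ (since $\lambda$ is real). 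Writing $y_v=\alpha_v|y_v|$ with $\alpha_v\in\mathbb{T}$, this pins down the ratio $\alpha_v/\alpha_u$ on every edge from the value of $h_{uv}$ and the sign of $\lambda$.

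Next I would split into three cases according to the sign of $\lambda$ and the parity of $k$, and read off the partition by grouping vertices by the value of $\alpha_v$. For $\lambda=+\Delta(G)$, propagation along a spanning tree rooted at a vertex with $\alpha=1$ shows $\alpha_v\in\mathbb{T}_k$, and setting $V_j=\{v:\alpha_v=\omega^{j-1}\}$ yields the arc congruence $r-s\equiv 1\pmod k$ of case (i). For $\lambda=-\Delta(G)$ the edge relation carries an extra factor of $-1$. When $k$ is even, $-1=\omega^{k/2}\in\mathbb{T}_k$, so $\alpha_v$ still lies in $\mathbb{T}_k$ and the same indexing, with all congruences shifted by $k/2$, gives case (ii) (including the fact that $M_G[V_j]$ is independent, since $s-j\equiv 0$ is forbidden for an undirected edge). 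When $k$ is odd, $-1\notin\mathbb{T}_k$ but $\mathbb{T}_k\cup(-\mathbb{T}_k)=\mathbb{T}_{2k}$, so defining $V'_j=\{v:\alpha_v=\omega^{j-1}\}$ and $V''_j=\{v:\alpha_v=-\omega^{j-1}\}$ yields the $2k$-part partition of case (iii).

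For the converse, in each of (i), (ii), (iii) I would set $y_v$ equal to the prescribed value of $\alpha_v$ and verify directly that $H(M_G)\mathbf{y}=\pm\Delta(G)\mathbf{y}$: the congruences on edge types together with $\Delta(G)$-regularity make every row sum equal to $\pm\Delta(G)\,y_v$, producing an eigenvector for $\pm\Delta(G)$. The main obstacle I anticipate is the bookkeeping in case (iii): one has to verify both that every edge of $M_G$ actually crosses between a primed and an unprimed set (forced by the factor of $-1$ in the edge relation, which excludes $V'_j\leftrightarrow V'_s$ and $V''_j\leftrightarrow V''_s$ when $k$ is odd) and that the arc congruence $r-s\equiv 1\pmod k$ carries across the $V'/V''$ split in exactly the way depicted in Fig.~\ref{fig:mixedGraph_Tk}(c). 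This reduces to computing, for each edge type, the product $\overline{\alpha_u}h_{uv}\alpha_v$ and checking that demanding it equal $-1$ reproduces precisely the stated congruences.
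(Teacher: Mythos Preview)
Your proposal is correct and, in its equality analysis and case split, tracks the paper's proof almost word for word: both arguments reduce to showing that $|y_v|$ is constant, that all the terms $\overline{y_u}h_{uv}y_v$ share the argument of $\lambda\in\{\pm\Delta(G)\}$, and then propagate $\alpha_v=y_v/|y_v|$ through the connected graph to land in $\mathbb{T}_k$ (case (i)), $\mathbb{T}_k$ with a $k/2$ shift (case (ii)), or $\mathbb{T}_k\cup(-\mathbb{T}_k)$ (case (iii)). The converse via an explicit eigenvector is also the same.

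The one genuine difference is how you obtain the inequality $|\lambda|\le\Delta(G)$ itself. The paper uses the ``row at the maximum-modulus entry'' trick: pick $v_1$ with $|x_{1}|=\max_i|x_i|=1$, bound $|\lambda|=|(H\mathbf{x})_1|\le d_G(v_1)\le\Delta(G)$, and then propagate the equality $|x_i|=1$ to neighbours by repeating the argument at each vertex. You instead route through the Rayleigh quotient and Perron--Frobenius: $|\lambda|\le |\mathbf{y}|^{T}A(G)\,|\mathbf{y}|\le\lambda_1(A(G))\le\Delta(G)$, so that regularity and constancy of $|\mathbf{y}|$ drop out in one stroke from the Perron eigenvector being the all-ones vector. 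Your route is a little cleaner conceptually (no inductive propagation needed for $|y_v|=\text{const}$), at the cost of invoking Perron--Frobenius; the paper's route is more elementary but requires the extra connectedness-propagation step to spread $|x_i|=1$ to all vertices. Either way the same edge-argument condition $\overline{\alpha_u}h_{uv}\alpha_v=\mathrm{sgn}(\lambda)$ emerges, and from that point on the two proofs coincide.
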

\begin{proof}
Let $H = H(M_G) = (h_{ij})_{n \times n}$ and let $\textbf{y} = (y_1, y_2, \cdots ,y_n)^T$ be an eigenvector corresponding to the eigenvalue $\lambda$ of $H$. Without loss of generality, let $\left| y_1 \right| = max\{ \left| y_i \right| : 1 \leq i \leq n \}$. Let $\textbf{x} = (x_1, x_2, \cdots ,x_n)^T = \textbf{y} / y_1$, then $\textbf{x}$ is also an eigenvector corresponding to the eigenvalue $\lambda$ of $H$ with $x_1 = 1 = max\{ \left| x_i \right| : 1 \leq i \leq n \}$.
Let vertex $v_i$ correspond to $x_i$, for $i \in \{ 1, 2, \cdots, n \}$.  Since $H \textbf{x} = \lambda \textbf{x} $, we have
$
  (H \textbf{x}) _1 = \lambda x_1
$,
subsequently,
\begin{eqnarray}
\nonumber  \left| \lambda \right|  &&= \left| \lambda x_1 \right| = \left| (H \textbf{x})_1 \right| \\
\nonumber   &&= \left| \big( \sum \limits_{ v_s \in N_{M_G}^0(v_1)}^{} x_s \big) + \omega \Big( \sum \limits_{ v_r \in N_{M_G}^+(v_1)}^{} x_r \big) + \overline{\omega} \big( \sum \limits_{ v_t \in N_{M_G}^-(v_1)}^{} x_t \big) \right| \\
\label{eq:2.2}   &&\leq \big( \sum \limits_{ v_s \in N_{M_G}^0(v_1)}^{} |x_s| \big) + |\omega| \big( \sum \limits_{ v_r \in N_{M_G}^+(v_1)}^{} |x_r| \big) + |\overline{\omega}| \big( \sum \limits_{ v_t \in N_{M_G}^-(v_1)}^{} |x_t| \big) \\
\label{eq:2.3}   &&\leq d_G(v_1) |x_1| \\
\label{eq:2.4}   &&\leq \Delta(G) |x_1| \\
\nonumber   &&= \Delta(G).
\end{eqnarray}
Hence, $|\lambda| \leq \Delta(G) $. (Note that $\lambda$ is an arbitrary eigenvalue of $M_G$.)

In what follows, we try to find out the extremal graphs with $|\lambda| = \rho(M_G) = \Delta(G)$, which require all  the equalities in (\ref{eq:2.2}), (\ref{eq:2.3}), (\ref{eq:2.4}) hold. We see that equality in (\ref{eq:2.4}) holds if and only if $d_G(v_1) = \Delta (G)$, whereas equality in (\ref{eq:2.3}) holds if and only if
\begin{equation}\label{eq:2.5}
  |x_i| = |x_1| = 1 \quad for \; all \quad v_i \in N_G(v_1).
\end{equation}

For any $v_i \in N_G(v_1)$, we can make a similar analysis and get that $d_G(v_i) = \Delta (G)$ and $|x_j| = |x_i| = 1$ for all $v_j \in N_G(v_i)$.
Since $M_G$ is connected, the rest vertices may be deduced by analogy. Therefore $M_G$ is $\Delta (G)$ regular and
\begin{equation} \label{eq:2.5.2}
|x_i| = |x_1| = 1 \;\; for \; \;  i \in \{ 1, 2, \cdots, n \}.
\end{equation}
The equality in (\ref{eq:2.2}) holds if and only if every complex number in the following set $S$ has the same argument, where
\begin{eqnarray}
\nonumber  S = \big\{ x_s: v_s \in N_{M_G}^0(v_1) \big\} &&\cup \;\; \big\{ \omega \cdot x_r: v_r \in N_{M_G}^+(v_1) \big\}
\\ \label{eq:2.6} &&\cup \;\; \big\{ \overline{\omega} \cdot x_t: v_t \in N_{M_G}^-(v_1) \big\}.
\end{eqnarray}
While
\begin{eqnarray}
\nonumber  \lambda x_1 &&= (H \textbf{x})_1
\\ \label{eq:2.7} &&= \big( \sum \limits_{ v_s \in N_{M_G}^0(v_1)}^{} x_s \big) + \omega \Big( \sum \limits_{ v_r \in N_{M_G}^+(v_1)}^{} x_r \big) + \overline{\omega} \big( \sum \limits_{ v_t \in N_{M_G}^-(v_1)}^{} x_t \big) ,
\end{eqnarray}
then  the equality in (\ref{eq:2.2}) holds if and only if every complex number in the set $S$ has the same argument  as $\lambda x_1$.

It is well known that the trace of a square matrix equals the sum of the eigenvalues, thus the sum of the eigenvalues of $H(M_G)$ is zero. Since a mixed graph whose eigenvalues are all zero is an empty graph, it is  sufficient to consider the cases $\lambda > 0$ and $\lambda < 0$. (Note that $\lambda$ is real.)
\par \quad \textbf{Case 1.} $\lambda  = \Delta(G) > 0$. By (\ref{eq:2.6}) and (\ref{eq:2.7}), every complex number in $S$ has the same argument with $x_1 = 1$. Combined with (\ref{eq:2.5.2}), we conclude that
\begin{eqnarray*}
 x_{i} =
\left\{
\begin{aligned}
&1, \quad &if \; v_i \in N_{M_G}^0(v_1); \\
&\overline{\omega}, \quad &if \; v_i \in N_{M_G}^+(v_1); \\
&\omega, \quad &if \; v_i \in N_{M_G}^-(v_1) .
\end{aligned}
\right.
\end{eqnarray*}

Similar discussion can be applied to each $v_i \in N(v_1)$, we obtain $x_i \in \mathbb{T}_k$. And so on, we get
$x_s \in \mathbb{T}_k$ for $ s \in \{ 1, 2, \cdots, n \}$ since $M_G$ is connected. Hence, $V(M_G)$ is partitioned into
\begin{equation*}
  V_1 \cup V_2 \cup \cdots \cup V_k, \; where \; V_j = \{ v_s | x_s = \omega ^ j \in \mathbb{T}_k  \}, \; j \in \{ 1, 2, \cdots, k \}.
\end{equation*}
So (i) holds.

\par \quad \textbf{Case 2.} $\lambda  = -\Delta(G) < 0$. By (\ref{eq:2.5.2}),(\ref{eq:2.6}) and (\ref{eq:2.7}), we obtain that every complex number in $S$ equals $-x_1 = -1$. With the same discussion as that of Case 1, we conclude that
\begin{eqnarray*}
 x_{i} =
\left\{
\begin{aligned}
&-1, \quad &if \; v_i \in N_{M_G}^0(v_1); \\
&-\overline{\omega}, \quad &if \; v_i \in N_{M_G}^+(v_1); \\
&-\omega, \quad &if \; v_i \in N_{M_G}^-(v_1) .
\end{aligned}
\right.
\end{eqnarray*}
Note that $x_1 = 1 \in \mathbb{T}_k$ and $-x_i \in  \{ 1, \omega, \overline{\omega} \} \subseteq  \mathbb{T}_k$.
\\ Considering $v_i \in N_G(v_1)$, process similarly as above, we conclude that
\begin{eqnarray*}
 x_{j} =
\left\{
\begin{aligned}
&(-1)x_i, \quad &if \; v_j \in N_{M_G}^0(v_i); \\
&(-\overline{\omega})x_i, \quad &if \; v_j \in N_{M_G}^+(v_i); \\
&(-\omega)x_i, \quad &if \; v_j \in N_{M_G}^-(v_i) .
\end{aligned}
\right.
\end{eqnarray*}
We have $x_j \in \mathbb{T}_k$ for any $v_j \in N_G(v_i)$.
Since $M_G$ is connected, the rest vertices may be deduced similarly. Thus each vertex $v_s$ satisfies $x_s \in \mathbb{T}_k$ or $-x_s \in \mathbb{T}_k$, where $ s \in \{ 1, 2, \cdots, n \}$.

If $k$ is even, then $-1 = \omega ^ {k/2} \in \mathbb{T}_k$. Similar as Case 1, we have $x_i \in \mathbb{T}_k$ for $i \in \{ 1, 2, \cdots, n \}$. Thus $V(M_G)$ can be partitioned into
$
  V_1 \cup V_2 \cup \cdots \cup V_k
$  according to the value of $x_i$, and (ii) holds.

If $k$ is odd, then $-1 \notin \mathbb{T}_k$ since $ \omega ^ j \neq -1 $ for each $j \in \{ 1, 2, \cdots, k \}$. We can separate $V(M_G)$ into two independent vertex sets $I_1$ and $I_2$ according to $x_i \in \mathbb{T}_k$ or $x_i \notin \mathbb{T}_k$, where $i \in \{ 1, 2, \cdots, n \}$. That means $M_G$ is a bipartite mixed graph. Recall that $M_G$ is $\Delta (G)$ regular, so $\left| I_1 \right| = \left| I_2 \right|$.
Since $x_i \in \mathbb{T}_k$ or $-x_i \in \mathbb{T}_k$ for $i \in \{ 1, 2, \cdots, n \}$, then $V(M_G)$ can be partitioned into $V'_{1} \cup V'_{2} \cup \cdots \cup V'_{k} \cup V''_{1} \cup V''_{2} \cup \cdots \cup V''_{k}$  according to the value of $x_i$, where $ V'_j = \{ v_i | x_i = \omega ^ j \in \mathbb{T}_k  \}, \; V''_j = \{ v_i | -x_i = \omega ^ j \in \mathbb{T}_k  \}, \; j \in \{ 1, 2, \cdots, k \}$. Hence (iii) holds.

\par \qquad Now, we consider the converse for (i) and (ii) of the theorem \ref{th:2.1}. Let $M_G$ be a mixed graph whose underlying graph is $\Delta(G)$-regular. Assume that $V(M_G)$ has a partition $\bigcup _{j \in T_k}V_j$ satisfying (i) or (ii).

\par \qquad Let $\textbf{x} = (x_1, x_2, \cdots , x_n)^T$ be the vector indexed by the vertices of $M_G$ such that $x_i = \omega ^ j$ if $v_i \in V_j$, where $i \in \{ 1, 2, \cdots, n \}$, $j \in \{ 1, 2, \cdots, k \}$.
Then
\begin{eqnarray*}
  (H \textbf{x})_i &= &\big( \sum \limits_{ v_s \in N_{M_G}^0(v_i)}^{} x_s \big) + \omega \Big( \sum \limits_{ v_r \in N_{M_G}^+(v_i)}^{} x_r \big) + \overline{\omega} \big( \sum \limits_{ v_t \in N_{M_G}^-(v_i)}^{} x_t \big)
 \\ &= &
\left\{
\begin{aligned}
&\Delta(G) x_i, \quad &if \;  (i) \; of \; Theorem \; \ref{th:2.1} \; holds; \; \\
&- \Delta(G) x_i, \quad &if \;  (ii) \; of \; Theorem \; \ref{th:2.1} \; holds.
\end{aligned}
\right.
\end{eqnarray*}
Thus $\textbf{x}$ is an eigenvector of $H$ corresponding to the eigenvalue $\Delta(G)$ or $-\Delta(G)$, and so $ \rho(M_G) = max \{ |\lambda| \} = \Delta (G)$. Then the bound is tight as claimed. The proof of the converse for (iii) is omitted, since it is same as above.
\end{proof}

In fact, (iii) is a subcase of (i) by setting $V_i = V'_i \cup V''_i$ in Theorem \ref{th:2.1}. Then there are two eigenvalues $\Delta (G)$ and $-\Delta (G)$ in (iii), and the following corollary holds.
\begin{corollary}\label{coro:2.1}
Let $M_G$ be a connected mixed graph with $\omega=cos(\frac{2\pi}{k})+ \textbf{\emph{i}} \;sin(\frac{ 2\pi}{k}), k\geq 3$ and $k$ is odd. If $-\Delta(G)$ is an eigenvalue of $H(M_G)$,
then  $\Delta(G)$ is also an eigenvalue of $H(M_G)$.
\end{corollary}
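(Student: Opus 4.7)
The plan is to apply Theorem \ref{th:2.1} in both directions. First, since $-\Delta(G)$ is an eigenvalue of $H(M_G)$, the spectral radius satisfies $\rho(M_G) \ge \Delta(G)$; combined with the universal upper bound $|\lambda| \le \Delta(G)$ established in Theorem \ref{th:2.1}, this forces $\rho(M_G) = \Delta(G)$. Because the witnessing eigenvalue is negative (namely $\lambda = -\Delta(G)$) and $k$ is odd, the extremal analysis of Theorem \ref{th:2.1} lands in Case 2 with $k$ odd, so conclusion (iii) must hold: $V(M_G)$ admits a partition
\[
V'_1 \cup V'_2 \cup \cdots \cup V'_k \cup V''_1 \cup V''_2 \cup \cdots \cup V''_k
\]
with the edge and arc structure described there.

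Next, I would coarsen this partition by setting $V_i := V'_i \cup V''_i$ for $i \in \{1, 2, \ldots, k\}$, obtaining a partition of $V(M_G)$ into $k$ parts. The claim is that $V_1 \cup \cdots \cup V_k$ satisfies condition (i) of Theorem \ref{th:2.1}. For each undirected edge $uv$, property (iii) provides some $j$ with $u \in V'_j$ and $v \in V''_j$, so both endpoints lie in $V_j$, as required by (i). For each arc $\overrightarrow{uv}$, (iii) provides indices $r, s$ with $r - s \equiv 1 \pmod{k}$ such that either $u \in V'_r,\ v \in V''_s$ or $u \in V''_r,\ v \in V'_s$; in either case $u \in V_r$ and $v \in V_s$ with $r - s \equiv 1 \pmod{k}$, which matches the arc condition of (i) exactly.

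Finally, I would invoke the converse part of Theorem \ref{th:2.1}: once a partition satisfying (i) is in hand, the explicit vector $\mathbf{x}$ defined by $x_i = \omega^j$ whenever $v_i \in V_j$ satisfies $H(M_G)\mathbf{x} = \Delta(G)\mathbf{x}$, so $\Delta(G)$ is an eigenvalue of $H(M_G)$, as required. I do not anticipate any substantive obstacle; the only minor care needed is to verify that the two symmetric arc possibilities in (iii) (namely $u \in V'_r, v \in V''_s$ or $u \in V''_r, v \in V'_s$) both collapse to the single arc requirement in (i) under the coarsening $V_i := V'_i \cup V''_i$, and that the index shift $r - s \equiv 1 \pmod{k}$ is preserved verbatim by this coarsening.
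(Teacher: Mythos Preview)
Your proposal is correct and follows essentially the same route the paper takes: the paper observes just before Corollary~\ref{coro:2.1} that ``(iii) is a subcase of (i) by setting $V_i = V'_i \cup V''_i$,'' so that a graph satisfying (iii) also satisfies (i), and then the converse direction of Theorem~\ref{th:2.1} yields $\Delta(G)$ as an eigenvalue. Your write-up simply makes this explicit and supplies the verifications the paper leaves implicit.
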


\vskip4mm\noindent{\bf Conflict of interest statement }

No potential conflict of interest was reported by the authors.

\vskip4mm\noindent{\bf Acknowledgements}

 The work was partially supported by the National Natural Science Foundation of China under Grants 11771172,12061039.

\vskip4mm\noindent{\bf Data Availability Statements.}

Data sharing not applicable to this article as no datasets were generated or analysed during the current study.

\end{document}